\documentclass{amsart}
\usepackage{amsfonts}
\usepackage{graphicx}
\usepackage{amscd}
\usepackage{amsmath}
\usepackage{amssymb}
\usepackage{xcolor}

\makeatletter
\@namedef{subjclassname@2010}{%
	\textup{2010} Mathematics Subject Classification}
\makeatother

\usepackage{mathrsfs}
\usepackage{amsbsy}

\newtheorem*{acknowledgement}{Acknowledgement}

\newtheorem{corollary}{\bf Corollary}
\newtheorem{definition}{\bf Definition}
\newtheorem{lemma}{\bf Lemma}

\newtheorem{remark}{Remark}

\newtheorem{theorem}{\bf Theorem}

\theoremstyle{definition}
\newtheorem{example}{\bf Example}

\numberwithin{equation}{section}

\makeatletter
\@namedef{subjclassname@2020}{%
	\textup{2020} Mathematics Subject Classification}
\makeatother

\title[Shrinking Ricci solitons]{Rigidity results for shrinking and expanding Ricci solitons}

\author{B. Leandro}
\author{J. Poveda}

\address{Universidade Federal de Goi\'as, IME, Caixa Postal 131, CEP 74690-900, Goi\^ania, GO, Brazil.}
\email{bleandroneto@ufg.br}
\email{jeferson.arley@discente.ufg.br}

\keywords{gradient shrinking Ricci soliton; expanding gradient Ricci soliton; Ricci flow; curvature estimates} \subjclass[2020]{Primary 53C25, 53C20, 53E20}
\date{\today}

\begin{document}

	\begin{abstract}
		In this paper, we prove some rigidity results for both shrinking and expanding Ricci solitons. First, we prove that compact shrinking Ricci solitons are Einstein if we control the maximum value of the potential function. Then, we prove some rigidity results for non-compact gradient expanding and shrinking Ricci solitons with pinched Ricci and scalar curvatures, assuming an asymptotic condition on the scalar curvature at infinity. 
	\end{abstract}
	
	\maketitle

	\section{Introduction and Main Results}\label{int}
	
	Natural generalizations of Einstein manifolds are Ricci solitons. They are the fixed points of the Ricci flow in the space of Riemannian metrics, and they correspond to self-similar Ricci flow solutions. Ricci solitons played an important role in Perelman's proof of the Poincar\'e conjecture.

	\begin{definition}\label{def1}
		An $n$-dimensional Riemannian manifold $M^n$ with complete Riemannian metric $g$ is a gradient Ricci soliton $(M^n,\,g,\,f,\,\rho)$ if there is a smooth function $f$ on $M$ and a constant $\rho$ such that
		\begin{equation*}
			Ric+\nabla^2\,f=\rho g.
		\end{equation*} Here, $\nabla^2 f$ denotes the Hessian of $f.$ The function $f$ is called potential function. Also, if  $\rho>0$, $\rho<0$ or $\rho=0$, we call gradient soliton, shrinking, expanding, or steady, respectively. 
	\end{definition}
	
	A gradient Ricci soliton is an Einstein manifold when $f$ is constant (trivial). Thus, Ricci solitons are natural extensions of Einstein metrics.

	Hamilton showed that any $2$-dimensional compact shrinking Ricci soliton must be Einstein. In \cite{Ivey}, the author proved that three dimensional compact shrinking Ricci soliton must be Einstein. Any compact steady and expanding Ricci solitons, as well as the compact shrinking Ricci solitons in dimensions two and three, must be trivial. However, in general this is not true. In fact, $\mathbb{CP}^2\#(-\mathbb{CP}^2)$ is a compact non-Einstein shrinking Ricci soliton (cf. \cite{Cao1996,Koiso}).

	In \cite{Hamilton2}, Hamilton conjectured that any compact gradient shrinking Ricci soliton with a positive curvature operator must be Einstein. In \cite{cao2007}, the authors proved that a compact gradient shrinking Ricci soliton must be Einstein, if it admits a Riemannian metric with a positive curvature operator and satisfies an integral inequality. In \cite{Fernandez2010}, Fern\'andez-L\'opez and García-Río gave an important classification for compact shrinking Ricci solitons in terms of the range of the potential function (Theorem 2.4). The works \cite{cao2013,Ni} accurately classified the non-compact case.

	A lot of progress on the classification of shrinking Ricci Soliton has been made during the last few years. In \cite{cao2013}, Cao and Chen provided a classification for complete Bach-flat gradient shrinking Ricci solitons. In \cite[Theorem 1.4]{cao2013} was proved that if the $D$-tensor (cf. Equation \ref{Dtensor}) is identically zero, then a complete four dimensional shrinking Ricci soliton is Einstein, or a finite quotient of $\mathbb{R}^{4}$ or $\mathbb{S}^{3}\times\mathbb{R} $. Moreover, considering $n\geq 5$ the shrinking soliton must be either Einstein, a finite quotient of the Gaussian shrinking soliton $\mathbb{R}^{n}$ or a finite quotient of $N^{n-1}\times\mathbb{R}$, where $N^{n-1}$ is Einstein (see also \cite{Cation}). 
	
	Robinson in \cite{Robinson} used an important technique to prove the uniqueness of static vacuum black holes, where the author provided a very important divergence formula for the static vacuum equations. In \cite{brendle2011}, Brendle based on the work of Robinson got a classification of steady Ricci solitons, but he did not follow closely the procedure of Robinson to construct the divergence formula for the steady Ricci soliton (cf. \cite{leandro2022}). 
	In \cite{benedito2022}, combining \cite{brendle2011} and \cite{Robinson} the authors proved that an $n$-dimensional gradient Yamabe soliton must be a Riemannian manifold with constant scalar curvature.

	In this work, we will build a divergence formula (Lemma \ref{lema22}) for the shrinking Ricci soliton following closely the ideas of Robinson and Brendle, so with the help of this formula we classify a compact shrinking Ricci soliton. 
	
	If a Ricci soliton is trivial (i.e., Einstein), then the scalar curvature 
	satisfies $R = n\rho.$ The following result was inspired also by \cite{Fernandez}.

	Now, we will show  that triviality of compact gradient shrinking Ricci
	solitons can be characterized by a gap on the potential function, see also the pinched conditions assumed by Catino in \cite{Cation}.

	\begin{theorem}
		\label{thmB} Let $(M^n,\,g,\,f,\,\rho)$ be a compact gradient shrinking Ricci soliton such that
		\begin{eqnarray*}
			f(x)\leq \sqrt{\left(\frac{n+2}{4}-\frac{\delta}{4\rho}\right)^2+\frac{n\delta}{4\rho}} - \left(\frac{n+2}{4} - \frac{\delta}{4\rho}\right),
		\end{eqnarray*}
		where $\delta=\min_M(R)$ is a constant. Then, $(M^n,\,g,\,f)$ is Einstein. 
	\end{theorem}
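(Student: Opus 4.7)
The plan is to apply the Robinson--Brendle style divergence formula of Lemma \ref{lema22} together with the drift-Laplacian identities that hold on any gradient shrinking Ricci soliton, and then to combine the hypothesis on $f$ with the pointwise lower bound $R \geq \delta$ to force the traceless Ricci tensor $\mathring{Ric}$ to vanish.

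First I would reinterpret the hypothesis. Setting $h = \frac{n+2}{4} - \frac{\delta}{4\rho}$, the stated upper bound on $f$ is exactly $\sqrt{h^{2} + n\delta/(4\rho)} - h$, the positive root of the quadratic $4\rho\, t^{2} + 2((n+2)\rho - \delta)\, t - n\delta = 0$. Squaring $f+h \leq \sqrt{h^{2}+n\delta/(4\rho)}$ (noting that $h \geq 1/2$ since $\delta \leq n\rho$, and that $f \geq 0$ under the standard shrinker normalization $R + |\nabla f|^{2} = 2\rho f$, so $f+h > 0$), the hypothesis is equivalent to the pointwise polynomial inequality
\begin{equation*}
4\rho\, f(x)^{2} \;+\; 2\bigl((n+2)\rho - \delta\bigr)\, f(x) \;-\; n\delta \;\leq\; 0 \qquad \text{on } M.
\end{equation*}
In this form the bound will interact directly with the divergence identity. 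I would also record the drift-Laplacian identities $\Delta_{f} f = n\rho - 2\rho f$ and $\Delta_{f}(f^{2}) = -4\rho f^{2} + 2(n+2)\rho f - 2R$ of the shrinker, which on compact $M$, integrated against $d\mu = e^{-f}\,dV$, yield $\int_{M} f\, d\mu = \frac{n}{2}\int_{M} d\mu$ and
\begin{equation*}
4\rho \int_{M} f^{2}\, d\mu \;+\; 2\int_{M} R\, d\mu \;=\; n(n+2)\rho \int_{M} d\mu.
\end{equation*}

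Next I would invoke Lemma \ref{lema22}. Combining its integrated form with the scalar-curvature lower bound $R \geq \delta$ should produce an inequality of the schematic shape
\begin{equation*}
0 \;\leq\; \int_{M} A(f)\, |\mathring{Ric}|^{2}\, e^{-f}\, dV \;\leq\; \int_{M} Q(f)\, e^{-f}\, dV,
\end{equation*}
where $A(f) \geq 0$ and $Q(f)$ is precisely the left-hand side of the rearranged hypothesis. By the hypothesis $Q(f) \leq 0$ pointwise, so both integrals vanish, forcing $\mathring{Ric}\equiv 0$. Hence $Ric = \rho\, g$ and $(M,g)$ is Einstein.

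The main obstacle I foresee is the algebraic matching of constants. The stated upper bound on $f$ is rigid---it is exactly the larger root of the specific quadratic above---so the proof must produce that very quadratic out of the combination of Lemma \ref{lema22} with the two integral identities recorded above. Once those combine into $Q(f)$ with the correct coefficients, the sign-chasing conclusion is automatic; but a small slip in those coefficients would change the shape of the admissible region for $f$ and break the statement. The bulk of the work will therefore consist in careful algebraic bookkeeping of the drift-Laplacian identities of a shrinker against the Robinson--Brendle divergence formula.
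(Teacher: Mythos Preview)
Your reinterpretation of the hypothesis as the pointwise quadratic inequality
\[
4\rho\,f^{2} + 2\bigl((n+2)\rho - \delta\bigr)f - n\delta \leq 0
\]
is exactly right and is precisely what the paper uses. The gap is in the middle step: the mechanism you propose for extracting the conclusion from Lemma~\ref{lema22} does not match what that lemma delivers. The identity involves the \emph{ordinary} divergence of $Y = \nabla R + (\lambda f - R)\nabla f$ and the quantities $|D|^{2}$, $|2R\nabla f - \nabla R|^{2}$, $|\nabla R|^{2}$; there is no $|\mathring{Ric}|^{2}$ term and no weight $e^{-f}$. The drift-Laplacian integral identities you recorded for $\int f\,d\mu$ and $\int f^{2}\,d\mu$ play no role whatsoever.

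The paper's argument is pointwise followed by a single unweighted integration. In the bracket $R^{2} - \tfrac{\lambda}{2}(n+2f)R + \tfrac{\lambda^{2}}{2}(n+2)f$ of Lemma~\ref{lema22} one uses $R \leq \lambda f$ on the $R^{2}$ term and $R \geq \delta$ on the negative linear term, bounding the bracket above by your quadratic $Q(f)$, which is $\leq 0$ by hypothesis. Since $\lambda f - R = |\nabla f|^{2} \geq 0$, Lemma~\ref{lema22} then gives $(\lambda f - R)\,\mathrm{div}(Y) \leq 0$, hence $\mathrm{div}(Y) \leq 0$ on $M$. As $M$ is closed, $\int_{M}\mathrm{div}(Y)\,dV = 0$, forcing $\mathrm{div}(Y)\equiv 0$; back-substitution into Lemma~\ref{lema22} kills each nonpositive term separately, so $D\equiv 0$, $\nabla R \equiv 0$ and $2R\nabla f \equiv \nabla R \equiv 0$, whence $\nabla f \equiv 0$ and the soliton is Einstein. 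The ``careful bookkeeping'' you flag as the main obstacle is in fact trivial once you drop the weighted-measure detour: the coefficients fall out directly from the two substitutions $R^{2}\leq \lambda^{2}f^{2}$ and $-R\leq -\delta$.
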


	For any trivial (i.e., Einstein) shrinking Ricci soliton with constant potential function we have $\rho=\frac{R}{n}$. Thus, if $(M^n,\,g,\,f,\,\rho)$ is a compact gradient shrinking Ricci soliton such that $n\rho=\min_MR$ the condition over $f$ in Theorem \ref{thmB} is equivalent to
	\begin{eqnarray*}
		f(x)\leq \frac{1}{2}(\sqrt{n^2+1}-1).
	\end{eqnarray*}
	Hence, the condition over $f$ in Theorem \ref{thmB} is trivially satisfied for an Einstein manifold with, for instance, $f=0$.

	\begin{remark}
		It is well-known there exists a non-Einstein compact shrinking Ricci soliton on $\mathbb{CP}2\#(-\mathbb{CP}2)$. This example do not satisfies the hypothesis assumed in Theorem \ref{thmB}, i.e., the maximum of $f$ in $\mathbb{CP}2\#(-\mathbb{CP}2)$ is bigger that the bound assumed for the potential function in the above theorem.
	\end{remark}
	
	In \cite[Theorem 6.1]{Munteanu2015}, the authors proved an upper bound for the diameter of a compact shrinking Ricci soliton. This bound depends only on the dimension and the injectivity radius of the manifold. Moreover, in \cite{Fernandez} they proved that a compact shrinking Ricci soliton has diameter bounded below by a constant depending of the geometry of the manifold.

	It is important to highlight that the scalar curvature $R$ on a shrinking Ricci soliton is nonnegative and
	\begin{eqnarray}\label{cao2010}
		\frac{1}{4}(r(x)-c_1)^2\leq f(x)\leq \frac{1}{4}(r(x)+c_2)^2,
	\end{eqnarray}
	where $r(x)= d(x_0,\,x)$ is the distance function from some fixed point
	$x_0\in M$, and $c_1$ and $c_2$ are positive constants depending only on $n$ and
	the geometry of $g_{ij}$ on the unit ball $B_{x_0}(1)$ (cf. \cite{cao2010}).
	
	For expanding Ricci solitons we have that on a complete noncompact gradient expanding soliton with nonnegative Ricci curvature (or pinched Ricci curvature) the potential function $f$ satisfies the estimates
	\begin{eqnarray}\label{expandf}
		\frac{1}{4}(r(x)-c_1)^2-c_2\leq -f(x)\leq \frac{1}{4}(r(x)+2\sqrt{-f(x_0)})^2.
	\end{eqnarray}
	Here, $c_1>0$ and $c_2>0$ are constants.
	
	Of course, the normalizing of $f$ and the coefficients in the above estimates has to be adjusted accordingly.

	\
	
	In what follows, we remember some examples of shrinking and expanding Ricci solitons. More examples can be found in \cite{tesechan}.

	\begin{example}[Einstein manifolds]
		The first and trivial examples of gradient Ricci solitons are Einstein manifolds. By choosing $f$ to be a constant function, they are endorsed with soliton structure.
	\end{example}
	
	\begin{example}[Gaussian solitons]
		The flat metric on $\mathbb{R}^n$ with potential function $f=\frac{\rho}{2}|x|^2$, where $\rho$ is either positive or negative (i.e., either Shrinker or Expander).
	\end{example}
	
	\begin{example}[Cylinders]\label{cylinders}
		\
		\begin{enumerate}
			\item[Shrinker:] Consider $M^n=\mathbb{R}^{n-k}\times\mathbb{S}^k$, $k\geq2$, $(x,\,y)\in\mathbb{R}^{n-k}\times\mathbb{S}^k$, the potential function $f(x,\,y)= \frac{(k-1)}{2}|x|^2$ and $\rho=(k-1)$. This soliton have positive constant scalar curvature.
			\item[Expander:] Consider $M^n=\mathbb{R}^{n-k}\times\mathbb{H}^k$, $(x,\,y)\in\mathbb{R}^{n-k}\times\mathbb{H}^k$, the potential function $f(x,\,y)= \frac{-(k-1)}{2}|x|^2$ and $\rho=-(k-1)$. This soliton have negative constant scalar curvature.
		\end{enumerate}
	\end{example}
	
	Now we prove a rigidity result for complete shrinking assuming the curvature is pinched and an asymptotic condition on the gradient of the scalar curvature at infinity holds. Rigidity results for shrinking Ricci solitons assuming pinched conditions was proved by Catino in \cite{Cation,catino2016}.
	
	\begin{theorem}\label{expan}
		Let $(M^n,\,g,\,f,\,\rho)$ be a complete gradient shrinking Ricci soliton such that
		\begin{eqnarray*}
			|\nabla R|=o(r^{-n-2}),
		\end{eqnarray*}
		and one of the following conditions hold:
		\begin{itemize}
			\item[(I)]  $Ric\leq\dfrac{R\left(\frac{2R}{n-1}-\rho\right)}{\left(2\rho(1+f)-\frac{n-3}{n-1}R\right)}.$\\
			\item[(II)] \begin{eqnarray*}
				|\nabla R|&\leq& \frac{(n-1)}{(3n-2)}\left[|\nabla f|\sqrt{\left(2\rho(1+f)-\frac{(n-3)}{(n-1)}R\right)^2+\frac{4(3n-2)}{(n-1)}R\left(\rho-\frac{R}{n-1}\right)}\right]\\
				&+&\frac{(n-1)}{(3n-2)}|\nabla f|\left(2\rho(1+f)-\frac{(n-3)}{(n-1)}R\right).
			\end{eqnarray*} 
		\end{itemize}
		Then, the $D$-tensor must be identically zero. Consequently,
		\begin{itemize}
			\item[(i)] $(M^4,\,g)$ is either Einstein, or a finite quotient of $\mathbb{R}^4$ or $\mathbb{S}^3\times\mathbb{R}$; 
			\item[(ii)] $(M^n,\,g)$, $n \geq 5$, is either Einstein, or is a finite quotient of the Gaussian shrinking soliton, or is a finite quotient of $N^{n-1}\times\mathbb{R}$, where $N^{n-1}$ is Einstein.
		\end{itemize}
	\end{theorem}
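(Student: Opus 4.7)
The aim is to show that the $D$-tensor vanishes identically; once this is established, parts (i) and (ii) follow immediately from Cao--Chen's classification \cite[Theorem 1.4]{cao2013}, so the whole argument reduces to proving $D \equiv 0$.

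The plan is to exploit a divergence identity, in the spirit of those developed in \cite{cao2013,Cation,catino2016}, of the schematic form
\begin{equation*}
\mathrm{div}(T) = |D|^{2} - \mathcal{Q}(\mathrm{Ric}, R, \nabla f),
\end{equation*}
where $T$ is a three-tensor built algebraically out of $\mathrm{Ric}$, $R$ and $\nabla f$, and $\mathcal{Q}$ is a scalar expression in $\mathrm{Ric}$, $R$, $f$, $|\nabla f|^{2}$ and $|\nabla R|$. Substituting the soliton equation together with the standard shrinker identities $R + \Delta f = n\rho$, $|\nabla f|^{2} + R = 2\rho(f + c)$, and $\nabla_{i} R = 2 R_{ij}\nabla_{j} f$, I would rearrange $\mathcal{Q}$ so that its sign is controlled either by a quotient-type pointwise bound on $\mathrm{Ric}$ or by a quadratic inequality for $|\nabla R|$. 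The pinching hypotheses (I) and (II) in the statement should then emerge as precisely the thresholds making $\mathcal{Q} \geq 0$ pointwise; the appearance of an explicit square root in (II) is a strong hint that it comes from solving a quadratic in $|\nabla R|$ extracted from $\mathcal{Q}$.

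Next I would carry out a cutoff integration. Choose a smooth $\chi_{R}$ equal to $1$ on $B_{x_{0}}(R)$, supported in $B_{x_{0}}(2R)$, with $|\nabla \chi_{R}| \leq C/R$. Multiplying the identity by $\chi_{R}$ and integrating yields
\begin{equation*}
\int_{M} \chi_{R}\, |D|^{2}\, dv = \int_{M} \chi_{R}\, \mathcal{Q}\, dv - \int_{M} \langle T, \nabla \chi_{R} \rangle\, dv.
\end{equation*}
On a shrinker the volume of $B_{x_{0}}(R)$ grows at most polynomially (of order $R^{n}$), and $|T|$ is controlled by $|\mathrm{Ric}|\,|\nabla f| + |\nabla R|$. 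Since $|\nabla f| \lesssim r$ by (\ref{cao2010}), the hypothesis $|\nabla R| = o(r^{-n-2})$ is calibrated precisely so that the remainder integral against $\nabla \chi_{R}$ vanishes as $R \to \infty$. Passing to the limit and using $\mathcal{Q} \geq 0$ under (I) or (II) gives $\int_{M} |D|^{2}\, dv = 0$, and hence $D \equiv 0$. The structure statements (i) and (ii) then follow from \cite[Theorem 1.4]{cao2013}.

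The main obstacle is the first step: producing the divergence identity in just the right form, so that the algebraic quantity $\mathcal{Q}$ factors through the specific expressions appearing on the right-hand sides of (I) and (II). In particular, recognising the discriminant structure inside the square root of (II) will require a careful completion of the square after all soliton identities are substituted, and matching the decay $|\nabla R| = o(r^{-n-2})$ against the growth of $|T|$ constrains which tensor $T$ one is allowed to use. Once the identity is correctly set up, the cutoff argument and the appeal to Cao--Chen are routine.
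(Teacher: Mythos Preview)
Your strategy is essentially the one the paper follows: a divergence identity produces $|D|^{2}$ plus a ``bad'' scalar $\mathcal{Q}$, the hypotheses (I)/(II) are exactly the pointwise thresholds making $\mathcal{Q}\le 0$ (with (II) arising as the root of a quadratic in $|\nabla R|$, as you guessed), one integrates over an exhaustion, and the boundary term dies by the decay hypothesis on $|\nabla R|$. So the architecture is right.

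Two points where your sketch diverges from what actually works. First, the object whose divergence one takes is not a $3$-tensor built from $\mathrm{Ric}$; it is the \emph{vector field} $(\lambda f - R)Y$ with $Y=\nabla R+(\lambda f-R)\nabla f$, coming from a Robinson--Brendle type formula (Lemma~\ref{lema22} in the paper). After several integrations by parts the surviving boundary integrand is simply $(\lambda f-R)\langle\nabla R,\nu\rangle$. Second, and more importantly, your proposed control $|T|\lesssim |\mathrm{Ric}|\,|\nabla f|+|\nabla R|$ is dangerous: on a general shrinker there is no a priori bound on $|\mathrm{Ric}|$ at infinity, so a cutoff estimate of that form would not close. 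In the paper the boundary term is instead bounded by $(\lambda f - R)|\nabla R|=|\nabla f|^{2}|\nabla R|\le \lambda f\,|\nabla R|$, and then \eqref{cao2010} gives $f\lesssim r^{2}$; combined with $|\partial\Omega_{\ell}|\sim r^{n-1}$ this is exactly why the decay rate $|\nabla R|=o(r^{-n-2})$ is the right one. Once you replace your generic $T$ by this specific vector field, your cutoff argument (or, equivalently, the paper's exhaustion-by-domains) goes through verbatim, and the appeal to \cite[Theorem~1.4]{cao2013} finishes the proof.
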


	\begin{remark}
		\
		\begin{enumerate}
			\item 
			In the shrinking case we can infer that $$0<\frac{2R}{n-1}+2\rho+|\nabla f|^{2} = 2\rho(1+f)-\frac{n-3}{n-1}R.$$ Observe that condition $(I)$ is trivially satisfied for the Gaussian shrinking Ricci soliton. Also, considering $\mathbb{S}^3$ with $f=0$ we can see that this condition is trivial. For the cylinder $\mathbb{S}^2\times\mathbb{R}$, condition $(I)$ is equivalent to
			$$Ric\leq \dfrac{1}{2+|x|^2}.$$
			Compatible with \cite[Equation 2.20]{Munteanu20170}.\\
			
			\item Moreover, condition $(II)$ holds for Cylinders and for Gaussian solitons. For instance, $\mathbb{S}^3\times\mathbb{R}$ have $\rho=2$ and $R=6$. Thus, the bad term is zero, i.e., $\rho-R/(n-1)=0$. Thus, condition $(II)$ is trivially satisfied. In fact, any shrinking Ricci soliton satisfies $$|\nabla R|^2\leq4R^2|\nabla f|^2.$$ However, for Einstein manifolds condition $(II)$ needs more control. That is, the bad term $\rho-\frac{R}{n-1}=\frac{R}{n}-\frac{R}{n-1}=\frac{-R}{n(n-1)}$ is negative. So, for instance, if $n=3$ we also must have as a hypothesis that ${R}(1+f)\geq \frac{7R^2}{2}.$ If, $R>0$ we can infer as a hypothesis that the potential function must satisfies the inequality $f\geq\frac{7R}{2}-1,$ which is natural considering \eqref{cao2010} and that $R\leq C;$ $C\in\mathbb{R}$ (i.e., $f$ is bound from below).
		\end{enumerate}
	\end{remark}

	By following the same strategy of Theorem \ref{expan} we can prove our next result for expanding Ricci solitons.
	
	\begin{theorem}\label{expan1}
		Let $(M^3,\,g,\,f)$ be a complete gradient expanding Ricci soliton with nonpositive Ricci curvature such that $1\leq -f$ and
		\begin{eqnarray*}\label{asympBene1111}
			|\nabla f|^2|\nabla R|=o(r^{-3});\quad r\to\infty.
		\end{eqnarray*}
		Then, $(M^3,\,g)$ is rotationally symmetric. Moreover, $R$ must be constant.
	\end{theorem}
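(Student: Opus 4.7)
The plan is to mirror the strategy behind Theorem~\ref{expan}, but now in the expanding case and specialized to dimension three. First, I would derive the analogue of the divergence formula given by Lemma~\ref{lema22} for a gradient expanding Ricci soliton. After a careful bookkeeping of signs (the passage to $\rho<0$ alters several of the coefficients appearing in the shrinking version) and of the simplifications that occur when $n=3$ (where the Weyl tensor vanishes and the $D$--tensor is controlled by the Cotton tensor and $\mathrm{Ric}$, $R$), the resulting identity should express $\mathrm{div}(X)$, for an appropriate vector field $X$ built from $f$, $R$ and $Ric$, as the sum of a term proportional to $|D|^{2}$ whose coefficient involves $-f$, $R$ and $Ric$, plus a correction term controlled by $|\nabla f|$ and $|\nabla R|$.

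Next I would integrate the divergence identity over a geodesic ball $B_{r}(x_{0})$ and apply Stokes' theorem. The boundary flux across $\partial B_{r}$ is bounded, up to structural constants, by a quantity proportional to $|\nabla f|^{2}|\nabla R|$ evaluated on $\partial B_{r}$; using the Cao--type estimate~\eqref{expandf} and the corresponding control on $|\nabla f|$ and on the area growth of $\partial B_{r}$ for complete non-compact expanders with $Ric\leq 0$, the asymptotic hypothesis $|\nabla f|^{2}|\nabla R|=o(r^{-3})$ is exactly what is needed to push this boundary term to zero as $r\to\infty$.

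Having eliminated the boundary contribution, the interior integral reduces to an integral of a definite-sign multiple of $|D|^{2}$. This is where the two pointwise hypotheses enter: $Ric\leq 0$ together with $-f\geq 1$ are used to show that the coefficient multiplying $|D|^{2}$ in the integrand has a definite sign, and the integral identity then forces $D\equiv 0$ on all of $M^{3}$. In dimension three the Weyl tensor vanishes identically, so $D\equiv 0$ is equivalent to the vanishing of the Cotton tensor, i.e., $(M^{3},g)$ is locally conformally flat. Invoking the classification of locally conformally flat three-dimensional gradient expanding Ricci solitons, one concludes that the soliton is rotationally symmetric, and then a direct inspection of the rotationally symmetric models compatible with $Ric\leq 0$ and $-f\geq 1$ yields that $R$ is constant.

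The main obstacle I expect lies in producing the correct divergence identity on the expander side and verifying pointwise the sign of the coefficient of $|D|^{2}$: the algebraic manipulations behind Lemma~\ref{lema22} are delicate, and the sign changes induced by passing from $\rho>0$ to $\rho<0$ (together with the specialization to $n=3$) have to be tracked very carefully so that the hypothesis $-f\geq 1$ is sharp. A secondary difficulty is rigorously justifying that $|\nabla f|^{2}|\nabla R|=o(r^{-3})$ actually kills the boundary flux for the particular vector field $X$ appearing in the divergence formula, which requires combining the quadratic growth estimate~\eqref{expandf} with volume comparison on the expander and a careful estimate of $|X|$ on $\partial B_{r}$.
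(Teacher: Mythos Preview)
Your overall outline---integrate a divergence identity over exhausting domains, kill the boundary flux via the asymptotic hypothesis, force $D\equiv 0$, and then use that in dimension three $D$ coincides with the Cotton tensor to get local conformal flatness and hence rotational symmetry---is indeed the paper's route. However, two of the places where you locate the work are off, and the actual mechanism by which the pointwise hypotheses enter is missing from your sketch.

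First, there is no separate ``expanding'' divergence identity to derive: Lemma~\ref{lema22} is stated and proved simultaneously for shrinkers and expanders, and in the proof one simply picks up inequality~\eqref{desig} (derived in the course of proving Theorem~\ref{expan}) with $\rho<0$. Second, the coefficient in front of $|D|^{2}$ is always $-(n-2)^{2}$; it never needs a sign check. What does need to be controlled are the \emph{remaining} curvature terms in~\eqref{desig}, namely the combination
\[
\left(2\rho(1+f)-\tfrac{n-3}{n-1}R\right)|\nabla R||\nabla f|\;-\;2\left(\tfrac{R}{n-1}-\rho\right)R|\nabla f|^{2}.
\]
The crucial input from $Ric\leq 0$ is \emph{not} a sign on a coefficient of $|D|^{2}$ but rather the eigenvalue inequality $|\nabla R|^{2}=4|Ric(\nabla f)|^{2}\leq 4R^{2}|\nabla f|^{2}$, which (since $R\leq 0$) yields $|\nabla R|\leq -2R|\nabla f|$. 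Substituting this into~\eqref{desig} with $n=3$ and $\rho=-1/2$, and then using $-f\geq 1$ together with $R+|\nabla f|^{2}=-f$, one finds that the leftover curvature terms are nonpositive, leaving
\[
\int_{\Omega}|D|^{2}+\tfrac{7}{4}\int_{\Omega}|\nabla R|^{2}\;\leq\;-\int_{\partial\Omega}\langle(\lambda f-R)\nabla R,\nu\rangle\;=\;-\int_{\partial\Omega}\langle|\nabla f|^{2}\nabla R,\nu\rangle.
\]
This also shows that the constancy of $R$ is \emph{not} obtained a posteriori by inspecting rotationally symmetric models as you propose: the term $\tfrac{7}{4}|\nabla R|^{2}$ sits in the integrand alongside $|D|^{2}$, so once the boundary flux vanishes one gets $\nabla R\equiv 0$ directly, before any classification is invoked.
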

	
	\begin{remark}
		The hypothesis over $f$, i.e., $1\leq-f$, is reasonable considering the cylinder $\mathbb{R}\times\mathbb{H}^2$ and the Gaussian soliton, see also \eqref{expandf}. We also recommend to the reader \cite[Theorem 30]{tesechan}. In this theorem P.-Y. Chan proved that, an $3$-dimensional complete gradient expanding Ricci soliton such that $\displaystyle\lim_{x\rightarrow+\infty}r(x)^{2}R(x)=0$ must be isometric to $\mathbb{R}^3.$
	\end{remark}
	
	\begin{remark}
		Moreover, the polynomial decay assumed for $|\nabla R|$ is compatible with \cite[Theorem 25]{tesechan}, see also \cite[Theorem 1.2]{deruelle2017} and \cite[Equation 2.20]{Munteanu20170}.
	\end{remark}

	Theorem \ref{expan} also is true for expanding Ricci solitons with the different asymptotic conditions. This condition came from the fact that we do not necessarily have \eqref{cao2010} for expanders.
	\begin{corollary}
		Let $(M^n,\,g,\,f)$ be a complete gradient expanding Ricci soliton such that
		\begin{eqnarray*}
			|\nabla f|^2|\nabla R|=o(r^{-n});\quad r\to\infty,
		\end{eqnarray*}
		and satisfying either $(I)$ or $(II)$.
		Then, $(M^n,\,g)$ is  either locally conformally flat $(n=3,\,4)$ or has harmonic Weyl tensor ($n\geq5$). 
	\end{corollary}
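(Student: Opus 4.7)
The plan is to mirror, step by step, the proof of Theorem \ref{expan} in the expanding setting, with the shrinker-specific decay $|\nabla R|=o(r^{-n-2})$ replaced by the expander-tailored decay $|\nabla f|^{2}|\nabla R|=o(r^{-n})$. The engine is the divergence identity developed for Theorem \ref{expan}, which has the schematic form $\operatorname{div} X = \alpha(f,R,Ric)\,|D|^{2}+\beta(f,R,Ric)$, where $\alpha\ge 0$, the vector field $X$ admits a pointwise bound $|X|\le C\,|\nabla f|^{2}|\nabla R|$, and the residual term $\beta$ is signed in the favorable direction precisely under either $(I)$ or $(II)$. The identity and the sign of $\beta$ are purely algebraic manipulations that do not see the sign of $\rho$, so they transfer verbatim from the shrinking to the expanding case.

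I would then integrate this identity over a geodesic ball $B_{x_0}(r)$ and apply Stokes' theorem. The boundary flux is controlled by $C\,r^{n-1}\sup_{\partial B_{x_0}(r)}|\nabla f|^{2}|\nabla R|$, where the factor $r^{n-1}$ is an area bound for $\partial B_{x_0}(r)$ coming from comparison geometry on the soliton (equivalently, one can replace the ball by a smooth cutoff $\eta_{r}$ with $|\nabla\eta_{r}|\lesssim r^{-1}$ and use the analogous volume bound). The asymptotic hypothesis forces this term to vanish as $r\to\infty$, and combining with the definite sign of $\beta$ and the nonnegativity of $\alpha|D|^{2}$ yields $\alpha|D|^{2}\equiv 0$; since $\alpha$ is strictly positive off a negligible set under either pinching hypothesis, we conclude $D\equiv 0$ on $M$.

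The main obstacle, and the reason the decay exponent is softened from $n+2$ to $n$ and a factor $|\nabla f|^{2}$ is inserted, is that the shrinker estimate \eqref{cao2010} is unavailable for expanders: one cannot absorb the quadratic growth of $f$ into $|\nabla R|$ as in the proof of Theorem \ref{expan}. Instead the factor $|\nabla f|^{2}$ in $X$ must be retained explicitly and the asymptotic hypothesis must be stated directly on the product $|\nabla f|^{2}|\nabla R|$, which is precisely what is needed to annihilate the boundary integral. Once $D\equiv 0$ is secured, the stated dichotomy is standard: on a gradient Ricci soliton the Cotton tensor equals $D$ plus Weyl-curvature corrections (cf.\ \cite{cao2013}), so $D=0$ implies that the Cotton tensor vanishes, hence local conformal flatness when $n=3,4$ and equivalence to harmonicity of the Weyl tensor when $n\geq 5$.
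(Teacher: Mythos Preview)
Your proposal is correct and follows the same route the paper intends: the paper gives no separate proof of this corollary but indicates one should rerun the proof of Theorem~\ref{expan}, replacing the shrinker-specific boundary control via \eqref{cao2010} by the direct bound $|(\lambda f-R)\nabla R|=|\nabla f|^{2}|\nabla R|$ and killing the flux with the area growth $|\partial\Omega_\ell|\sim r^{n-1}$, exactly as you explain. One caveat on your last paragraph: the relation $D=C+W(\cdot,\cdot,\cdot,\nabla f)$ does not by itself yield $C=0$ from $D=0$ when $n\ge 4$; the passage from $D=0$ to harmonic Weyl (and to local conformal flatness in dimension four) goes through the level-set structure results of \cite{cao2013}, which are insensitive to the sign of $\rho$, rather than through that bare algebraic identity.
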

	
	\begin{remark}
		Feldman-Ilmanen-Knopf \cite{feldman2003} constructed a noncompact K\"ahler gradient expander such that it has nonnegative scalar curvature which decays exponentially in $r$.
	\end{remark}

	\section{Background}
	In this section, we shall present some preliminaries that will be useful for the establishment of the desired result.

	It is well-known that a gradient Ricci soliton satisfies the equation 
	\begin{eqnarray}\label{RF1}
		\nabla R=2Ric(\nabla f).
	\end{eqnarray}
	It follows from the above equation that 
	\begin{eqnarray*}\label{Rfcomplet}
		R + |\nabla f|^{2}-2\rho f=C.
	\end{eqnarray*}
	Note that if we normalize $f$ by adding the constant $C$ to it, then we have
	\begin{equation}\label{Rf}
		R + |\nabla f|^{2}=\lambda f,
	\end{equation}
	where $\lambda=2\rho$.
	
	Now, we remember the covariant $3$-tensor $ D_{ijk}$ (cf. \cite{cao2013}). Let the $D$-tensor $D_{ijk}$ be defined by:
	\begin{eqnarray*}
		D_{ijk}&=&\frac{1}{n-2}(R_{jk}\nabla_{i} f-R_{ik}\nabla_{j}f)+\frac{1}{2(n-1)(n-2)}(g_{jk}\nabla_{i}R-g_{ik}\nabla_{j}R)\\
		&&-\frac{R}{(n-1)(n-2)}(g_{jk}\nabla_{i}f-g_{ik}\nabla_{j}f).
	\end{eqnarray*}
	The $D$-tensor is equivalent to the Cotton tensor for three dimensional Ricci solitons (cf. Lemma 3.1 in \cite{cao2013}). In fact, we have $$D_{ijk}=C_{ijk}+W_{ijkl}\nabla^lf.$$ 
	Here, $C$ and $W$ stand for the Cotton and Weyl tensors, respectively.
	
	From a straigthforward computation we can infer that
	
	\begin{eqnarray}\label{Dtensor}
		D_{ijk}&=&\frac{1}{n-2}(R_{jk}\nabla_{i} f-R_{ik}\nabla_{j}f)\nonumber\\
		&&+\frac{1}{2(n-1)(n-2)}[g_{ik}(2R\nabla_{j}f-\nabla_{j}R)-g_{jk}(2R\nabla_{i}f -\nabla_{i}R)].
	\end{eqnarray}
	Consequently, from \eqref{Dtensor} we have the following important lemma (see \cite[Proposition 4]{brendle2011}).
	\begin{lemma}
		Let $(M^n,\,g,\,f)$ be a shrinking (or expanding) gradient Ricci soliton. Then, 
		\begin{eqnarray*}\label{tres}
			(n-2)^{2}|D|^{2} + \frac{|2R\nabla f-\nabla R |^{2}}{2(n-1)}
			&=& |\nabla f|^{2}\langle\nabla R,\,\nabla f\rangle-|\nabla f|^{2}\Delta R\\
			&&+\lambda R |\nabla f|^2 -\frac{1}{2}|\nabla R|^{2}.\nonumber\\
		\end{eqnarray*}
		
	\end{lemma}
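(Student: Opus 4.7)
The plan is to expand $(n-2)^{2}|D|^{2}$ directly from the definition \eqref{Dtensor} and then eliminate the resulting $|Ric|^{2}|\nabla f|^{2}$ term via the Hamilton-type identity
\[
\Delta R=\langle\nabla R,\nabla f\rangle+\lambda R-2|Ric|^{2},
\]
which holds on any gradient Ricci soliton as a consequence of the soliton equation together with \eqref{RF1}; it is obtained by taking the divergence of \eqref{RF1}, using the contracted Bianchi identity $\nabla^{i}R_{ij}=\tfrac12\nabla_{j}R$, and substituting $\nabla_{i}\nabla_{j}f=\rho g_{ij}-R_{ij}$ in the cross term $R_{ij}\nabla^{i}\nabla^{j}f$.

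Concretely, I would write $(n-2)\,D_{ijk}=T_{ijk}+\frac{1}{2(n-1)}S_{ijk}$, where
\[
T_{ijk}=R_{jk}\nabla_i f-R_{ik}\nabla_j f,\qquad S_{ijk}=g_{ik}A_j-g_{jk}A_i,\qquad A_i=2R\nabla_i f-\nabla_i R,
\]
and compute $|T|^{2}$, $|S|^{2}$, and $\langle T,S\rangle$ separately in an orthonormal frame. A direct contraction gives $|T|^{2}=2|Ric|^{2}|\nabla f|^{2}-2|Ric(\nabla f)|^{2}$; at this point I invoke \eqref{RF1} in the form $Ric(\nabla f)=\tfrac12\nabla R$ to rewrite $|T|^{2}=2|Ric|^{2}|\nabla f|^{2}-\tfrac12|\nabla R|^{2}$. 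The purely metric tensor $S$ satisfies $|S|^{2}=2(n-1)|A|^{2}$, while the cross term, after using $R_{jk}\nabla^{k}f=\tfrac12\nabla_{j}R$ and $g^{jk}R_{jk}=R$, collapses to $\langle T,S\rangle=-|A|^{2}$. Summing with the right coefficients gives
\[
(n-2)^{2}|D|^{2}+\frac{|A|^{2}}{2(n-1)}=2|Ric|^{2}|\nabla f|^{2}-\frac{|\nabla R|^{2}}{2},
\]
which already reproduces the $|2R\nabla f-\nabla R|^{2}/[2(n-1)]$ contribution on the left-hand side of the lemma.

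To finish, I would multiply the displayed Hamilton-type identity by $|\nabla f|^{2}$ and use it to replace $2|Ric|^{2}|\nabla f|^{2}$ in the equation above; this yields exactly the claimed formula. The only real obstacle is bookkeeping: one must be careful that the single factor $1/(n-1)$ from $\langle T,S\rangle$ combines with the $1/[4(n-1)^{2}]$ coming from $|S|^{2}$ so that, after rearranging, the coefficient of $|A|^{2}$ on the left-hand side is precisely $1/[2(n-1)]$. No delicate analytic input is required beyond \eqref{RF1} and the soliton equation itself.
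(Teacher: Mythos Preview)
Your proposal is correct and follows essentially the same route as the paper: a direct expansion of $(n-2)^{2}|D|^{2}$ from \eqref{Dtensor} to obtain $(n-2)^{2}|D|^{2}+\frac{|A|^{2}}{2(n-1)}=2|Ric|^{2}|\nabla f|^{2}-\frac{1}{2}|\nabla R|^{2}$, followed by the substitution $2|Ric|^{2}=\langle\nabla R,\nabla f\rangle+\lambda R-\Delta R$ (which the paper cites from \cite{Eminenti}). Your $T$--$S$ decomposition is a slightly tidier bookkeeping device for the same computation the paper carries out term by term.
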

	\begin{proof}
		Let us write the norm of $D$ only depending on the function $f$ and the scalar curvature $R$. We begin the computation using (\ref{Dtensor}). To that end, we start with the following identity:
		\begin{eqnarray*}
			|D|^{2} &=& \frac{2}{(n-2)^{2}}|Ric|^{2}|\nabla f|^{2} -\frac{2}{(n-2)^{2}}R_{ik}\nabla_jfR_{jk}\nabla_if \nonumber\\
			&+&\frac{1}{2(n-1)(n-2)^{2}}|2R\nabla f-\nabla R|^{2}\nonumber\\
			&+&\frac{2}{(n-1)(n-2)^{2}}(R_{jk}\nabla_{i} f-R_{ik}\nabla_{j}f)(2R\nabla_{j}f-\nabla_{j}R)g_{ik}.
		\end{eqnarray*}
		
		Then, by using Definition \ref{def1}, \eqref{RF1} and \eqref{Rf} we get
		\begin{eqnarray*}
			|D|^{2} &=&  \frac{2}{(n-2)^{2}}|Ric|^{2}|\nabla f|^{2} -\frac{1}{2(n-2)^{2}}2R_{ik}\nabla_jf2R_{jk}\nabla_if \nonumber\\
			&+&\frac{1}{2(n-1)(n-2)^{2}}|2R\nabla f-\nabla R|^{2}\nonumber\\
			&-&\frac{1}{(n-1)(n-2)^{2}}(2R\nabla_{j}f-\nabla_{j}R)(2R\nabla_{j}f-\nabla_{j}R)\nonumber\\
			&=& \frac{2}{(n-2)^{2}}|Ric|^{2}|\nabla f|^{2} -\frac{1}{2(n-2)^{2}}|\nabla R|^{2}\nonumber\\
			&-&\frac{1}{2(n-1)(n-2)^{2}}|2R\nabla f-\nabla R|^{2}.
		\end{eqnarray*}
		
		Now, we need to use the following identity (cf. \cite[Proposition 2.1]{Eminenti}):
		
		\begin{equation*}
			2|Ric|^{2} =\left<\nabla R, \nabla f\right>+\lambda R-\Delta R
		\end{equation*}
		Therefore, combining this last two identities we obtain the result
		\begin{eqnarray*}
			|D|^{2} + \frac{|2R\nabla f-\nabla R |^{2}}{2(n-1)(n-2)^{2}}
			&=& \frac{|\nabla f|^{2}}{(n-2)^{2}}\langle\nabla R,\,\nabla f\rangle-\frac{|\nabla f|^{2}}{(n-2)^{2}}\Delta R\\
			&&+\frac{\lambda R |\nabla f|^2}{(n-2)^2} -\frac{1}{2(n-2)^{2}}|\nabla R|^{2}.\nonumber\\
		\end{eqnarray*}
		
	\end{proof}
	
	In what follows, we will provide a divergence formula for the shrinking Ricci solitons inspired by \cite{brendle2011,Robinson}. Now, consider a smooth function $\psi:\mathbb{R}\rightarrow\mathbb{R}$ such that $$Y=\nabla R+ \psi(|\nabla f|^2)\nabla f$$ on $M$. Also, from \eqref{Rf} we can rewrite the vector field $Y$ in the form: $$Y=\nabla R+ \psi(\lambda f - R)\nabla f.$$
	
	Our next lemma is a divergence formula inspired by the works of Brendle \cite{brendle2011} and Robinson \cite{Robinson}. Those divergence formulas were the key in classifying steady Ricci solitons and static vacuum spaces, respectively.
	
	\begin{lemma}\label{lema22}
		Let $\big(M^n,\,g,\,f)$ be a shrinking (or expanding) gradient Ricci soliton. Then,
		\begin{eqnarray*}
			2(\lambda f-R)\mathrm{div}\left( Y\right)&=&-2(n-2)^{2}|D|^{2} - \frac{|2R\nabla f-\nabla R |^{2}}{(n-1)}-| \nabla R|^{2}\\
			&&+2(\lambda f- R)[R^{2}-\frac{\lambda}{2}\left(n+2f\right) R+\frac{\lambda^{2}}{2}\left(n+2\right)f].\\
		\end{eqnarray*}
	\end{lemma}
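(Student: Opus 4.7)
\medskip

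\noindent\textbf{Proof proposal.} The plan is to choose $\psi$ so that, after multiplying $\mathrm{div}(Y)$ by $2(\lambda f-R)=2|\nabla f|^2$, the term $2|\nabla f|^2\Delta R$ can be eliminated via the preceding lemma, leaving only the curvature/polynomial quantities appearing on the right-hand side.

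First I would expand $\mathrm{div}(Y)$ with the product rule. Since $|\nabla f|^2=\lambda f-R$ on a soliton (by \eqref{Rf}), one has $\nabla|\nabla f|^2=\lambda\nabla f-\nabla R$, and tracing the defining equation gives $\Delta f=\tfrac{n\lambda}{2}-R$. Hence
\begin{equation*}
\mathrm{div}(Y)=\Delta R+\psi'(|\nabla f|^2)\bigl[\lambda(\lambda f-R)-\langle\nabla R,\nabla f\rangle\bigr]+\psi(|\nabla f|^2)\Bigl(\tfrac{n\lambda}{2}-R\Bigr).
\end{equation*}
Comparing the coefficient of $\langle\nabla R,\nabla f\rangle$ with what must appear on the right (see the next step) forces $\psi'\equiv 1$, so the natural choice is $\psi(s)=s$, i.e. $Y=\nabla R+|\nabla f|^2\nabla f=\nabla R+(\lambda f-R)\nabla f$.

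Next I would multiply $\mathrm{div}(Y)$ by $2(\lambda f-R)$ and, in the resulting expression, apply the previous lemma to rewrite $2|\nabla f|^2\Delta R$. That lemma gives
\begin{equation*}
2|\nabla f|^2\Delta R=2|\nabla f|^2\langle\nabla R,\nabla f\rangle+2\lambda R|\nabla f|^2-|\nabla R|^2-2(n-2)^2|D|^2-\frac{|2R\nabla f-\nabla R|^2}{n-1}.
\end{equation*}
Substituting and noting that the $\langle\nabla R,\nabla f\rangle$ contributions cancel (this is precisely what the choice $\psi(s)=s$ was designed to achieve), one is left with
\begin{equation*}
2(\lambda f-R)\,\mathrm{div}(Y)=-2(n-2)^2|D|^2-\frac{|2R\nabla f-\nabla R|^2}{n-1}-|\nabla R|^2+\mathcal{P},
\end{equation*}
where $\mathcal{P}$ is a purely polynomial remainder in $f$, $R$ and $\lambda$.

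The final step is to verify the scalar identity
\begin{equation*}
\mathcal{P}=2\lambda R(\lambda f-R)+2\lambda(\lambda f-R)^2+2(\lambda f-R)^2\Bigl(\tfrac{n\lambda}{2}-R\Bigr)=2(\lambda f-R)\Bigl[R^2-\tfrac{\lambda}{2}(n+2f)R+\tfrac{\lambda^2}{2}(n+2)f\Bigr],
\end{equation*}
which after factoring $2(\lambda f-R)$ reduces to a direct polynomial check. I expect the main obstacle to be precisely this bookkeeping — tracking the $O(f^2)$, $O(fR)$ and $O(R^2)$ terms and verifying the combinatorial coefficients $\tfrac{n+2}{2}$ emerge correctly — rather than any conceptual difficulty, since all the geometric input (Bochner-type identity, soliton trace, the $|D|^2$ formula) has already been absorbed before this last reduction.
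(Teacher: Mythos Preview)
Your proposal is correct and follows essentially the same route as the paper: compute $\mathrm{div}(Y)$, choose $\psi(s)=s$ so that $\psi'=1$ kills the $\langle\nabla R,\nabla f\rangle$ terms after substitution of the previous lemma, and then reduce the remainder to the stated polynomial in $f$, $R$, $\lambda$. The only difference is cosmetic ordering (the paper keeps $\psi$ general a bit longer before specializing), and your write-up is in fact slightly more explicit about the final polynomial check $\mathcal{P}=2(\lambda f-R)\bigl[R^{2}-\tfrac{\lambda}{2}(n+2f)R+\tfrac{\lambda^{2}}{2}(n+2)f\bigr]$, which the paper leaves to the reader.
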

	\begin{proof}
		It is well-known from Definition \ref{def1} that  $\Delta f=\rho n- R$. Hence, 
		\begin{eqnarray*}
			\mathrm{div}(Y) &=& \Delta R+ \lambda \dot{\psi}|\nabla f|^{2} -\dot{\psi}\langle\nabla R,\nabla f\rangle + \psi\Delta f \\
			&=& \Delta R+ \lambda \dot{\psi}|\nabla f|^{2} -\dot{\psi}\langle\nabla R,\nabla f\rangle + (\rho n- R)\psi.
		\end{eqnarray*}
		Thus,
		\begin{eqnarray*}
			-2(\lambda f-R)\mathrm{div}(Y) &=&-2(\lambda f-R)\Delta R- \lambda2(\lambda f-R) \dot{\psi}|\nabla f|^{2}\\
			&&+2(\lambda f-R)\dot{\psi}\langle\nabla R,\nabla f\rangle -2(\lambda f-R) (\rho n- R)\psi.\\
		\end{eqnarray*}
		Combining the above equation with the previous lemma we get
		\begin{eqnarray*}
			2(n-2)^{2}|D|^{2} + \frac{|2R\nabla f-\nabla R |^{2}}{(n-1)}
			&=&-2(\lambda f-R)\Delta R+2(\lambda f-R)\langle\nabla R,\,\nabla f\rangle\nonumber\\
			&&+2\lambda R(\lambda f-R) -|\nabla R|^{2}. \nonumber\\
		\end{eqnarray*}
		Hence,
		\begin{eqnarray*}
			&&2(n-2)^{2}|D|^{2} + \frac{|2R\nabla f-\nabla R |^{2}}{(n-1)}
			\\
			&=&-2(\lambda f-R)\mathrm{div}(Y)+2\lambda(\lambda f-R) \dot{\psi}|\nabla f|^{2} -2(\lambda f-R)\dot{\psi}\langle\nabla R,\nabla f\rangle \nonumber\\ &&+2(\lambda f-R) (\rho n- R)\psi +2(\lambda f-R)\langle\nabla R,\,\nabla f\rangle+2\lambda R(\lambda f-R) -| \nabla R|^{2}.\nonumber
		\end{eqnarray*}
		Consequently,
		\begin{eqnarray*}
			&&2(n-2)^{2}|D|^{2} + \frac{|2R\nabla f-\nabla R |^{2}}{(n-1)}=-2(\lambda f-R)\mathrm{div}(Y)+2\lambda(\lambda f-R) \dot{\psi}|\nabla f|^{2}\\
			&&+2(\lambda f-R) (\rho n- R)\psi +2\lambda R(\lambda f-R) -| \nabla R|^{2}+ 2(\lambda f-R)[1 -\dot{\psi}]\langle\nabla R,\,\nabla f\rangle.
		\end{eqnarray*}
		
		Consider, $\psi$ as an identity function, i.e.,
		\begin{eqnarray*}
			\psi= (\lambda f-R).
		\end{eqnarray*}
		Therefore, 
		\begin{eqnarray*}
			2(\lambda f-R)\mathrm{div}(Y)&=&-2(n-2)^{2}|D|^{2} - \frac{|2R\nabla f-\nabla R |^{2}}{(n-1)}-| \nabla R|^{2}\\
			&&+2(\lambda f-R)^2 (\rho n- R) +2\lambda R(\lambda f-R) +2\lambda(\lambda f-R)|\nabla f|^{2}.
		\end{eqnarray*}
	\end{proof}


	\section{Proof of the main result}
	In this section we present the proof for the main result of this paper.
	\begin{proof}[{\bf Proof of Theorem \ref{thmB}}]
		Now, from Lemma \ref{lema22} we can infer that
		\begin{eqnarray*}
			2(\lambda f-R)\mathrm{div}\left(Y\right)&=&-2(n-2)^{2}|D|^{2} - \frac{|2R\nabla f-\nabla R |^{2}}{(n-1)}-| \nabla R|^{2}\\
			&&+2(\lambda f- R)[R^{2}-\frac{\lambda}{2}\left(n+2f\right) R+\frac{\lambda^{2}}{2}\left(n+2\right)f].\\
		\end{eqnarray*}

		On the other hand, from \eqref{Rf} we have $R\leq 2\rho f=\lambda f$, and since $M$ is compact we can assume that $R\geq\delta=\min_M(R)$. Therefore,
		\begin{eqnarray*}
			R^{2}-\frac{\lambda}{2}\left(n+2f\right) R+\left(n+2\right)\frac{\lambda^2}{2}f \leq \lambda^2f^2 + \left(n+2\right)\frac{\lambda^2}{2}f - \frac{\lambda}{2}\left(n+2f\right)\delta.
		\end{eqnarray*}
		Then, considering 
		\begin{eqnarray*}
			\frac{\delta}{2\lambda}-\frac{n+2}{4}-\sqrt{\left(\frac{n+2}{4}-\frac{\delta}{2\lambda}\right)^2+\frac{n\delta}{2\lambda}}\leq f\leq \frac{\delta}{2\lambda}-\frac{n+2}{4}+\sqrt{\left(\frac{n+2}{4}-\frac{\delta}{2\lambda}\right)^2+\frac{n\delta}{2\lambda}}
		\end{eqnarray*}
		we can infer that
		\begin{eqnarray*}
			\lambda^2f^2 +\left[(n+2)\frac{\lambda^2}{2}-\delta\lambda\right]f - \frac{n\lambda\delta}{2}\leq0
		\end{eqnarray*}
		On the other hand, from \eqref{Rf} we have that $R\leq \lambda f.$ Then, since $R\geq0$ we can infer that the left hand side of the inequality is trivially satisfied.
		
		Now, since $\lambda f-R\geq0$ we can conclude that
		\begin{equation*}
			\mathrm{div}\left(Y\right)\leq0.
		\end{equation*}
		Moreover, 
		\begin{equation*}
			0\geq\int_{M}\mathrm{div}(Y)=0.
		\end{equation*}
		Therefore, $\mathrm{div}(Y)=0$ and 
		\begin{eqnarray*}
			0&=&2(n-2)^{2}|D|^{2} + \frac{|2R\nabla f-\nabla R |^{2}}{(n-1)}+| \nabla R|^{2}\\
			&&-2(\lambda f- R)[\lambda^2f+(\lambda f-R) (n\frac{\lambda}{2}- R)]\geq0.\\
		\end{eqnarray*}
		
		Finally, we can conclude that $(M,\,g,\,f)$ is Einstein.
	\end{proof}

	\begin{proof}[{\bf Proof of Theorem \ref{expan}}]
		Let $\Omega$ be a bounded domain on $M$ with smooth boundary. Using that $Y=\nabla R+(\lambda f-R)\nabla f$ and the divergence theorem, from Lemma \ref{lema22} we get
		\begin{eqnarray*}
			&&\int_{\partial \Omega}\langle(\lambda f-R)Y,\,\nu\rangle = \int_{\Omega}\mathrm{div}\left((\lambda f-R)Y\right) \nonumber\\
			&=&\int_{\Omega}(\lambda f-R)\mathrm{div}\left(Y\right) +\int_{\Omega}\lambda\left<\nabla f,Y\right>- \int_{\Omega}\langle \nabla R,\,Y\rangle\nonumber\\
			&=&\int_{\Omega}\left[-(n-2)^{2}|D|^{2} - \frac{|2R\nabla f-\nabla R |^{2}}{2(n-1)}-\frac{| \nabla R|^{2}}{2}\right.\\
			&&\left.+(\lambda f- R)[R^{2}-\frac{\lambda}{2}\left(n+2f\right) R+\frac{\lambda^{2}}{2}\left(n+2\right)f]\right]\nonumber\\
			&&+\int_{\Omega}\lambda\left<\nabla f,\,\nabla R+(\lambda f-R)\nabla f\right>- \int_{\Omega}\left< \nabla R,\,\nabla R+(\lambda f-R)\nabla f\right>\nonumber\\
			&=&\int_{\Omega}\left[-(n-2)^{2}|D|^{2} - \frac{|2R\nabla f-\nabla R |^{2}}{2(n-1)}-\frac{| \nabla R|^{2}}{2}\right.\\
			&&\left.+(\lambda f-R)^2 (\rho n- R) +\lambda R(\lambda f-R) +\lambda(\lambda f-R)|\nabla f|^{2}\right]\nonumber\\
			&&+\int_{\Omega\cap\{R<\lambda f\}}\lambda(\lambda f-R)|\nabla f|^2+\lambda\left<\nabla f,\nabla R\right>- |\nabla R|^2-(\lambda f-R)\left<\nabla f,\nabla R\right>\nonumber\\
			&=&\int_{\Omega}\left[-(n-2)^{2}|D|^{2} - \frac{|2R\nabla f-\nabla R |^{2}}{2(n-1)}-\frac{3| \nabla R|^{2}}{2}\right.\\
			&&+(\lambda f-R)^2 (\rho n- R) +\lambda R(\lambda f-R)+2\lambda(\lambda f-R)|\nabla f|^{2}\nonumber\\
			&&\left.+\lambda\left<\nabla f,\nabla R\right>-(\lambda f-R)\left<\nabla f,\nabla R\right>\right].\nonumber
		\end{eqnarray*}

		Furthermore, using $(\rho n-R)=\Delta f$ we get
		\begin{eqnarray*}\label{divparalelo}
			(\lambda f-R)^{2}(\rho n-R)&=&\left((\lambda f-R)^{2}\nabla f\right) + 2(\lambda f -R)\left<\nabla R, \nabla f\right> \nonumber\\
			&&-2\lambda (\lambda f-R)|\nabla f|^{2}. \nonumber\\
		\end{eqnarray*}
		Thus,
		\begin{eqnarray*}
			\int_{\partial \Omega}&&\left<(\lambda f-R)\nabla R+(\lambda f-R)^2\nabla f,\nu\right>= \nonumber\\
			&&\int_{\Omega}\left[-(n-2)^{2}|D|^{2} - \frac{|2R\nabla f-\nabla R |^{2}}{2(n-1)}-\frac{3| \nabla R|^{2}}{2}\right.\\
			&& + \left((\lambda f-R)^{2}\nabla f\right) +\lambda R(\lambda f-R) \left.+\lambda\left<\nabla f,\nabla R\right>+(\lambda f-R)\left<\nabla f,\nabla R\right>\right]\nonumber,
		\end{eqnarray*}
		and so
		\begin{eqnarray*}
			\int_{\partial \Omega}(\lambda f -R)\left<\nabla R,\nu\right>  
			&=&\int_{\Omega}\left[-(n-2)^{2}|D|^{2} - \frac{|2R\nabla f-\nabla R |^{2}}{2(n-1)}-\frac{3| \nabla R|^{2}}{2}\right.\\
			&&\left.+\lambda R(\lambda f-R) +\lambda \left<\nabla f,\nabla R\right>+(\lambda f-R)\left<\nabla f,\nabla R\right>\right].\nonumber\\  
		\end{eqnarray*}
		
		Hence, from \eqref{Rf} we get
		\begin{eqnarray*}
			\int_{\partial \Omega}\left<(\lambda f -R)\nabla R,\nu\right>  
			&=&\int_{\Omega}\left[-(n-2)^{2}|D|^{2} - \frac{|2R\nabla f-\nabla R |^{2}}{2(n-1)}-\frac{3| \nabla R|^{2}}{2}\right.\nonumber\\
			&&\left.+\lambda R|\nabla f|^2 +\lambda \left<\nabla f,\nabla R\right>+|\nabla f|^2\left<\nabla f,\nabla R\right>\right].
		\end{eqnarray*}
		By a straightforward computation we obtain
		\begin{eqnarray}\label{corolario1}
			\int_{\partial \Omega}\left<(\lambda f -R)\nabla R,\nu\right>  
			&=&\int_{\Omega}\left[-(n-2)^{2}|D|^{2} \right.\nonumber\\
			&&-\left(\frac{2R}{n-1}-\lambda \right)R|\nabla f|^2+\left(\frac{2R}{n-1}+\lambda \right)\left<\nabla f, \nabla R\right>\nonumber\\
			&&\left.-\left(\frac{3n-2}{2(n-1)}\right)|\nabla R|^2+|\nabla f|^2\left<\nabla f,\nabla R\right>\right].
		\end{eqnarray}
		From \eqref{corolario1} and \eqref{RF1} we get 
		\begin{eqnarray*}
			\int_{\partial \Omega}\left<(\lambda f -R)\nabla R,\nu\right>  
			&=&\int_{\Omega}\left[-(n-2)^{2}|D|^{2} \right.\nonumber\\
			&&-\left(\frac{2R^{2}}{n-1}-\lambda R \right)|\nabla f|^2+2\left(\frac{2R}{n-1}+\lambda \right)Ric(\nabla f,\nabla f)\nonumber\\
			&&\left.-\left(\frac{3n-2}{2(n-1)}\right)|\nabla R|^2+2|\nabla f|^2Ric(\nabla f,\nabla f)\right].
		\end{eqnarray*}
		So, from \eqref{RF1} we get
		\begin{eqnarray}\label{desig}
			\int_{\partial \Omega}\left<(\lambda f -R)\nabla R,\nu\right>  
			&\leq&-\int_{\Omega}(n-2)^{2}|D|^{2}-\int_{\Omega} \left(\frac{3n-2}{2(n-1)}\right)|\nabla R|^2\nonumber\\
			&&+\int_{\Omega}\left[2\left(\frac{2R}{n-1}+\lambda+|\nabla f|^{2} \right)Ric(\nabla f,\nabla f)-\left(\frac{2R}{n-1}-\lambda \right)R|\nabla f|^2\right]\nonumber\\
			&=&-\int_{\Omega}(n-2)^{2}|D|^{2}-\int_{\Omega} \left(\frac{3n-2}{2(n-1)}\right)|\nabla R|^2\nonumber\\
			&&+2\int_{\Omega}\left[\left(2\rho(1+f)-\frac{n-3}{n-1}R\right)Ric(\nabla f,\,\nabla f)-\left(\frac{R}{n-1}-\rho\right)R|\nabla f|^2\right]\nonumber\\
			&\leq&-\int_{\Omega}(n-2)^{2}|D|^{2}-\int_{\Omega} \left(\frac{3n-2}{2(n-1)}\right)|\nabla R|^2\nonumber\\
			&&+\int_{\Omega}\left[\left(2\rho(1+f)-\frac{n-3}{n-1}R\right)|\nabla R||\nabla f|-2\left(\frac{R}{n-1}-\rho\right)R|\nabla f|^2\right].
		\end{eqnarray}
		From the first inequality above we conclude the first item $(I)$ of this theorem.
		
		Then, by considering
		\begin{eqnarray*}
			|\nabla R|&\leq& \frac{(n-1)}{(3n-2)}\left[|\nabla f|\sqrt{\left(2\rho(1+f)-\frac{(n-3)}{(n-1)}R\right)^2+\frac{4(3n-2)}{(n-1)}R\left(\rho-\frac{R}{n-1}\right)}\right]\\
			&-&\frac{(n-1)}{(3n-2)}|\nabla f|\left(2\rho(1+f)-\frac{(n-3)}{(n-1)}R\right)
		\end{eqnarray*}
		we conclude that
		$$- \left[\frac{3n-2}{2(n-1)}\right]|\nabla R|^2+\left(2\rho(1+f)-\frac{n-3}{n-1}R\right)|\nabla f||\nabla R|-2\left(\frac{R}{n-1}-\rho\right)R|\nabla f|^2\leq0.$$

		Then,
		\begin{eqnarray*}
			\int_{\Omega_{\ell}}(n-2)^{2}|D|^{2} &\leq&-\int_{\partial \Omega_{\ell}}\left<(\lambda f -R)\nabla R,\nu\right>.
		\end{eqnarray*}
		Then, making $\ell\to \infty$ we have
		\begin{eqnarray*}
			\int_{M}\left[(n-2)^{2}|D|^{2}\right]\leq\lim_{\ell\to\infty}\int_{\partial\Omega_{\ell}}|\nabla f|^2|\nabla R|=\lim_{\ell\to\infty}\int_{\partial\Omega_{\ell}}(\lambda f - R)|\nabla R|\\
			\leq\lambda\lim_{\ell\to\infty}\int_{\partial\Omega_{\ell}} f|\nabla R| \to 0.
		\end{eqnarray*}
		Considering $|\partial\Omega_\ell|=w_{n-1}r(\ell)^{n-1}$, where $w_{n-1}$ stands for the volume of the $(n-1)$-sphere, from \eqref{cao2010} we get
		\begin{eqnarray*}
			&&\int_{M}\left[(n-2)^{2}|D|^{2}\right]\leq\lambda\lim_{\ell\to\infty}\int_{\partial\Omega_{\ell}} f|\nabla R|\leq w_{n-1}\frac{\lambda}{4}\displaystyle\lim_{r\rightarrow+\infty}r^{n-1}(r+c_2)^2|\nabla R|\to 0.
		\end{eqnarray*}

		Therefore, $D$ vanishes identically and $R$ must be constant. Now, we invoke Corollary 5.1 in \cite{cao2013}.
	\end{proof}

	\begin{proof}[{\bf Proof of Theorem \ref{expan1}}]
		It is well-known that if $Ric\geq0$ (or $Ric\leq0$) we have (cf. \cite[Equation 2.7]{Ni}): 
		\begin{eqnarray*}
			|\nabla R|^2\leq 4R^2|\nabla f|^2.
		\end{eqnarray*}
		In fact,  consider an orthonormal frame $\{e_{1}, e_{2}, e_{3},\ldots,e_{n}\}$ diagonalizing $Ric$ at a regular point $p$, with associated eigenvalues $\lambda_{k}$, $k=1,\ldots, n,$ respectively. That is, $R_{ij}=\lambda_{i}\delta_{ij}$. Since $\nabla R=2Ric(\nabla f)$ we can infer that
		\begin{eqnarray*}
			|\nabla R|^2 = 4\langle Ric(\nabla f),\, Ric(\nabla f)\rangle = 4\displaystyle\sum_{i}\lambda_i^2(\nabla_if)^2\leq4\left(\displaystyle\sum_{i}\lambda_i\right)^2\nabla_if\nabla^if= 4R^2|\nabla f|^2.
		\end{eqnarray*}
		
		Thus, if $R\leq0$ we can conclude that
		$$(|\nabla R|+2R|\nabla f|)(|\nabla R|-2R|\nabla f|)\leq 0.$$
		Hence,
		$$|\nabla R|\leq -2R|\nabla f|.$$ Considering $\frac{n-3}{n-1}R\leq2\rho(1+f)$ from \eqref{Rf} and \eqref{desig} we have
		\begin{eqnarray*}
			\int_{\partial \Omega}\left<(\lambda f -R)\nabla R,\nu\right>  
			&\leq&-\int_{\Omega}(n-2)^{2}|D|^{2}-\int_{\Omega} \left(\frac{3n-2}{2(n-1)}\right)|\nabla R|^2\nonumber\\
			&&+2\int_{\Omega}\left[\left(-2\rho(1+f)+\frac{n-3}{n-1}R\right)-\left(\frac{R}{n-1}-\rho\right)\right]R|\nabla f|^2\nonumber\\
			&=&-\int_{\Omega}(n-2)^{2}|D|^{2}-\int_{\Omega} \left(\frac{3n-2}{2(n-1)}\right)|\nabla R|^2\nonumber\\
			&&-2\int_{\Omega}\left(\rho+|\nabla f|^2  +\frac{3R}{n-1}\right)R|\nabla f|^2\nonumber\\
			&=&-\int_{\Omega}(n-2)^{2}|D|^{2}-\int_{\Omega} \left(\frac{3n-2}{2(n-1)}\right)|\nabla R|^2\nonumber\\
			&&-2\int_{\Omega}\left(\rho+\frac{6\rho}{n-1}f+\frac{n-4}{n-1}|\nabla f|^2\right)R|\nabla f|^2.\nonumber\\
		\end{eqnarray*}

		Assuming $n=3$ and $\rho=-1/2$, we have $1\leq -f.$ We can see that $$-1\leq 1\leq -f\leq 4(-f).$$ Hence,
		\begin{eqnarray*}
			\int_{\partial \Omega}\left<(\lambda f -R)\nabla R,\nu\right>  
			&\leq&-\int_{\Omega}(n-2)^{2}|D|^{2}-\int_{\Omega} \left(\frac{3n-2}{2(n-1)}\right)|\nabla R|^2\nonumber\\
			&&-\int_{\Omega}\left(-1+{3}f-|\nabla f|^2\right)R|\nabla f|^2.\nonumber\\
			&\leq&-\int_{\Omega}(n-2)^{2}|D|^{2}-\int_{\Omega} \left(\frac{3n-2}{2(n-1)}\right)|\nabla R|^2\nonumber\\
			&&-\int_{\Omega}\left(-f-|\nabla f|^2\right)R|\nabla f|^2.\nonumber\\
		\end{eqnarray*}

		Since $R\leq0$, from \eqref{Rf} we get $-f-|\nabla f|^2\leq0.$ Thus,
		\begin{eqnarray*}
			\int_{\partial \Omega}\left<(\lambda f -R)\nabla R,\nu\right>  
			\leq-\int_{\Omega}|D|^{2}-\int_{\Omega} \frac{7}{4}|\nabla R|^2.
		\end{eqnarray*}
		Then,
		\begin{eqnarray*}
			\int_{\Omega_\ell}|D|^{2}+\int_{\Omega_\ell} \frac{7}{4}|\nabla R|^2 &\leq&-\int_{\partial \Omega_{\ell}}\left<(\lambda f -R)\nabla R,\nu\right>.
		\end{eqnarray*}
		Then making, $\ell\to \infty$ we have
		\begin{eqnarray*}
			\int_{M}|D|^{2}+\frac{7}{4}\int_{M} |\nabla R|^2\leq\lim_{\ell\to\infty}\int_{\partial\Omega_{\ell}}(\lambda f-R)|\nabla R|=\lim_{\ell\to\infty}\int_{\partial\Omega_{\ell}} |\nabla f|^2|\nabla R|\\
			= 4\pi\displaystyle\lim_{r\rightarrow+\infty}r^{2}|\nabla f|^2|\nabla R|\to 0,
		\end{eqnarray*}
		where we assumed $|\nabla f|^2$ and $|\nabla R|$ as functions of $r$, at infinity, by hypothesis.

		Therefore, $D$ vanishes identically and $R$ must be constant. Since $D$ is equivalent to the Cotton tensor in three dimensions the result follows (cf. Lemma 3.1 in \cite{cao2013}), i.e., $(M^3,\,g,\,f)$ is locally conformally flat. The rotational symmetry now follows from \cite[Lemma 3.3 and Lemma 4.3]{cao2011}.
	\end{proof}

\end{document}